\newtheorem{prethm}{{\bf Theorem}}
\newenvironment{thm}{\begin{prethm}{\hspace{-0.5
               em}{\bf}}}{\end{prethm}}
\newtheorem{prepro}{{\bf Theorem}}
\newtheorem{precon}{{\bf Conclusion }}
\newenvironment{con}{\begin{precon}{\hspace{-0.5
               em}{\bf}}}{\end{precon}}
\newtheorem{preprop}{{\bf Proposition}}
\newenvironment{prop}{\begin{preprop}{\hspace{-0.5
               em}{\bf}}}{\end{preprop}}
\newtheorem{preex}{{\bf Example}}
\newtheorem{precor}{{\bf Corollary}}
\newtheorem{preconj}{{\bf Conjecture}}
\newtheorem{preremark}{{\bf Remark}}
\newtheorem{prefact}{{\bf Fact}}
\newtheorem{prelem}{{\bf Lemma}}
\newenvironment{lem}{\begin{prelem}{\hspace{-0.5
               em}{\bf}}}{\end{prelem}}
\newtheorem{preproof}{{\em Proof.}}
\newenvironment{proof}[1]{\begin{preproof}{\rm
               #1}\hfill{$\Box$}}{\end{preproof}}
\newcommand{\diam}{\textrm{diam}}
\newcommand{\NEPS}{\textrm{NEPS}}
\newcommand{\Spec}{\textrm{Spec}}
\title{\bf Some non-sign-symmetric signed graphs with symmetric spectrum}
\author{\bf  F. Ramezani \and
  \\ {\it Faculty of Mathematics,
 K. N. Toosi University of Technology,}\\
 {\it  P. O. Box $16315$--$1618$,  Tehran, Iran}
 \\[.3cm]
 {\it E-mail:} \ {\tt ramezani@kntu.ac.ir}
 \date{}}
\begin{document}
\maketitle

\begin{abstract}
We construct infinitely many signed graphs having symmetric spectrum, by using the NEPS and rooted product of signed graphs. We also present a method for constructing large cospectral signed graphs. Although the obtained family contains only a minority of signed graphs, it strengthen the belief that the signed graphs with symmetric spectrum are deeper than bipartite graphs, i.e the unsigned graphs with symmetric spectrum.
\end{abstract}
 \vspace*{1cm}
 \noindent {\bf AMS Subject Classification:} 05C50, 05C22.\\
 {\bf Keywords:} Signed graphs, symmetric spectrum, sign-symmetric.

\section{Introduction}
 In this paper, we consider only undirected  simple graphs (loops and multiple
edges are not allowed). Through out this article let $n$ be the order, and $\{v_1,v_2,\ldots,v_n\}$ be the vertex set of the graph $G$. By $v_i\sim v_j$ we mean these two vertices are adjacent in the graph, otherwise we use $v_i\nsim v_j$. By $d_{G}(u,v)$ we denote the distance between the vertices $u,v$ in the graph $G$. The notation $\diam(G)$ stands for the diameter of a graph $G$.

A graph $G=(V,E)$ provided with a sign function
$\sigma:E\longrightarrow \{+,-\}$ is called a
\textit{signed graph}. We denote a signed graph on $G$ with sign function $\sigma$,
by $\Sigma=(G,\sigma)$.
The graph $G$ is called the \textit{ground}
of $\Sigma$ and the edge set $E^{-}_{\Sigma}=\sigma^{-1}(-1)$ is called the
\textit{signature} of it. By the edge and vertex set of $\Sigma$ we mean
those of the ground graph that are $V,E$ respectively. For any edge $e$ of $\Sigma$, we call it a
\textit{positive} or \textit{negative} edge if $\sigma(e)$ has positive or negative sign
respectively. For $\Sigma=(G,\sigma)$, by $-\Sigma$ we mean the signed graph $(G,-\sigma)$, which is called \textit{negative} of $\Sigma$.
A signed graph with a bipartite ground will be called a \textit{signed bipartite graph}. A signed graph $(G,\sigma)$, all of whose edges have the same sign $+$ ($-$) is simply denoted by $G^+$ ($G^-$). By
\textit{resigning} at a vertex $v$ of $\Sigma$ we mean negating
signs of all the edges incident with $v$. Two signed graphs
$(G,\sigma)$ and $(G,\sigma')$ are called \textit{switching
equivalent} if one is obtained from the other by a sequence of
resignings. By $\Sigma\sim\Sigma'$, we mean that $\Sigma$ and $\Sigma'$ are switching equivalent.
For a function $f:A\rightarrow B$, and a subset $S$ of $A$, by $f|_S$ we mean the restriction of $f$ to $S$. For a signed graph $\Sigma=(G,\sigma)$ and a vertex $v$ of $G$,
by $\Sigma-v$ we mean the signed graph with ground $G-v$ where the corresponding sign function is $\sigma|_{E(G-v)}$. By a \textit{rooted graph} we mean a graph in which one specified vertex is distinguished as the root. A \textit{rooted signed graph} is a signed graph with a rooted ground.

With $A_{\Sigma}$ we denote the adjacency matrix of $\Sigma$, which is an $n\times n$ matrix with the following entries: $$A_{\Sigma}(i,j)=\left\{
                                                                                                         \begin{array}{ll}
                                                                                                           \sigma(v_iv_j), & \hbox{$v_i\sim v_j$;} \\
                                                                                                           0, & \hbox{$v_i\nsim v_j$.}
                                                                                                         \end{array}
                                                                                                       \right.
$$ We denote the characteristic polynomial of $\Sigma$, i.e $\det(\lambda I-A_{\Sigma})$, by $\chi(\Sigma)=\chi({\Sigma, \lambda})$.
The multiset of eigenvalues of $A_{\Sigma}$ is called the {\it spectrum} of $\Sigma$ and denoted by $\Spec(\Sigma)$.
Since $A_{\Sigma}$ is a symmetric matrix, the eigenvalues of $A_{\Sigma}$ (or ${\Sigma}$)
are real. We say that a signed graph $\Sigma$ has a \textit{symmetric spectrum} if for
any $\lambda\in \Spec(\Sigma)$,  $-\lambda$ belongs to $\Spec(\Sigma)$ with the same multiplicity. It is obvious that signed graphs with bipartite ground has a symmetric spectrum. For more details on the terminologies see \cite{ZAS}.

 We say that a function $\varphi:V(G)\rightarrow V(H)$ is an \textit{isomorphism} between signed graphs $(G,\sigma)$ and $(H,\pi)$ if the followings hold.
\begin{itemize}
  \item $\varphi$ is a graph isomorphism between $G$ and $H$,
  \item $\sigma(uv)=\pi(\varphi(u)\varphi(v))$.
\end{itemize}
Two signed graphs are called \textit{isomorphic}, denoted by $\Sigma\simeq\Pi$, if there is an isomorphism between them. Otherwise we call them \textit{non-isomorphic}. $\Sigma=(G,\sigma)$ and $\Pi=(H,\pi)$ are said to be \textit{switching isomorphic} if there exists an isomorphic image of $\Sigma$, say $\Pi'$, such that $\Pi'\sim \Pi$. Otherwise we call them \textit{switching non-isomorphic}. By $\Sigma\cong\Pi$, we mean that $\Sigma$ is switching isomorphic to $\Pi$.

A signed graph $\Sigma$ is called \textit{sign-symmetric} if $\Sigma\cong-\Sigma$. It is known that sign-symmetric graphs have symmetric spectrum.  The signed graphs with symmetric spectrum have attracted many attentions because of the folklore result on the ordinary graphs which states a graph admits a symmetric spectrum if and only if it is bipartite, see for instance \cite{BH} for ordinary, and \cite{FR} for signed graphs. For signed graphs it seems a much more richer family of them may admit symmetric spectrum. The problem of determining the properties of signed graphs with symmetric spectrum is proposed in \cite{BCKW,EF}. In \cite{EF}, the authors have presented the following example $(K_8,\sigma)$ of a non-sign-symmetric signed graph, having symmetric spectrum ($K_8$ is the complete graph on $8$ nods). We refer to the signed graph of Figure \ref{sk} by $SK_8$. Note that in $SK_8$, shown in Figure 1, red edges are negative and blue edges are positive edges of $SK_8$.

\vspace{-0.5cm}
\hspace{4.5cm}
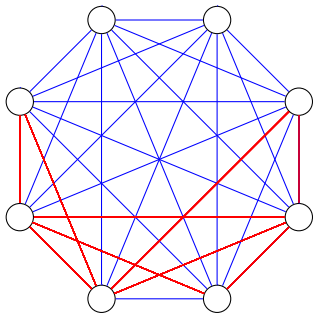
\begin{figure}[h] \label{sk}
 { \centering

\begin{tikzpicture}[mystyle/.style={draw,shape=circle}]
\def\ngon{8}
\node[regular polygon,regular polygon sides=\ngon,minimum size=4cm] (p) {};
\foreach\x in {1,...,\ngon}{\node[mystyle] (p\x) at (p.corner \x){};}
\foreach\x in {1,...,\numexpr\ngon-1\relax}{
\foreach\y in {\x,...,\ngon}{
\draw (p\x) -- (p\y)[blue];
\draw (p3) -- (p4)[red];
\draw (p3) -- (p5)[red];
\draw (p4) -- (p5)[red];
\draw (p4) -- (p6)[red];
\draw (p4) -- (p7)[red];
\draw (p5) -- (p7)[red];
\draw (p5) -- (p8)[red];
\draw (p6) -- (p7)[red];
\draw (p7) -- (p8)[red];}
\vspace{3cm}
}
\end{tikzpicture}
\caption {$SK_8:$\textrm{ A non-sign-symmetric signed graph with symmetric spectrum}}}
\end{figure}

In fact signed graphs with symmetric spectrum which are not sign-symmetric is of main interest, also it is interesting to determine all the signed graphs with symmetric spectrum.
%

\section{Preliminaries}

We recall two main definitions of signed graph products, namely non-complete extended p-sum ($\NEPS$) and rooted product of signed graphs.
\subsection{NEPS of signed graphs}
The \textit{non-complete extended p-sum} or \textit{NEPS} of signed graphs is defined in \cite{GERZAS}. It is based on the Cvetkovi$\acute{\textrm{c}}$ product for ordinary graphs, see \cite{CVE}. The following is their definition. This product is defined in terms of a basis for the product, that is $\mathcal{B}\subseteq \{0,1\}^k$. Set $\Sigma_i=(\Gamma_i,\sigma_i)$, for $i=1,\ldots,k$. First it is defined for one arbitrary vector
$\beta = (\beta_1, \beta_2,\ldots,\beta_k ) \in \{0, 1\}^k$. This product, written $\NEPS(\Sigma_1,\ldots,\Sigma_k ; \beta)$, is the signed
graph $(\Gamma,\sigma)$ with vertex set: $V_{\Gamma} = V_1 \times V_2 \times\cdots\times V_k$, and edge set:
$$E_{\Gamma} = \{(u_1,\ldots, u_k )(v_1,\ldots, v_k ) \textrm{ : } u_i = v_i \textrm{ if } \beta_i = 0 \textrm{ and } u_iv_i \in E_i \textrm{ if } \beta_i = 1\},$$
and the sign function is as following: $$\sigma((u_1,\ldots, u_k )(v_1,\ldots, v_k ))=\prod_{i=1}^{k}\sigma_i(u_iv_i)^{\beta_i}=\prod_{i:\beta_i=1}\sigma_i(u_iv_i).$$
 In the general definition there is a set $\mathcal{B} = \{\beta_1,...,\beta_q\}\subseteq\{0, 1\}^k \setminus \{(0, 0,..., 0)\}$ and it is defined:
$$\NEPS(\Sigma_1,..., \Sigma_k ; \mathcal{B}) = \bigcup_{\beta\in \mathcal{B}} \NEPS(\Sigma_1,\ldots,\Sigma_k ; \beta),$$
the underlying graph of which is the Cvetkovi$\acute{\textrm{c}}$ product $\NEPS(\Gamma_1, \Gamma_2,..., \Gamma_k ; \mathcal{B})$ of
the underlying graphs as defined in \cite[Section 2.5]{CVE}. At \cite{GERZAS} the authors have also evaluated the eigenvalues of $\NEPS(\Sigma_1, \Sigma_2,..., \Sigma_k ; \mathcal{B})$ based on eigenvalues of the factors. Their result follows.
\begin{thm} \label{zas1} {\rm {\bf\cite{GERZAS}}
  The eigenvalues of $\NEPS(\Sigma_1, \Sigma_2,..., \Sigma_k ; \mathcal{B})$ are $$\lambda_{j_1\ldots j_k}=\sum_{\beta\in \mathcal{B}}\lambda_{1j_1}^{\beta_1}\ldots \lambda_{kj_k}^{\beta_k},$$ for $1\leq j_1\leq  n_1,\ldots, 1\leq j_k \leq n_k$, where $n_i$ is the vertex number of $\Sigma_i$ and $\lambda_{ij_i}$ is the $j_i$th eigenvalue of $\Sigma_i$, for $i=1,2,\ldots,k$.  }
\end{thm}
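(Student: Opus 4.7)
My plan is to mimic the classical Cvetkovi\'c proof for the unsigned case, replacing $\{0,1\}$-adjacency matrices by signed adjacency matrices and verifying that the signs multiply correctly. The backbone is a Kronecker product decomposition of $A_{\NEPS(\Sigma_1,\ldots,\Sigma_k;\mathcal{B})}$, followed by a simultaneous diagonalisation using tensor products of eigenvectors of the individual $A_{\Sigma_i}$.

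First I would fix, for each $\beta=(\beta_1,\ldots,\beta_k)\in\mathcal{B}$, the matrix
\[
M_\beta \;=\; M_1^{(\beta)}\otimes M_2^{(\beta)}\otimes\cdots\otimes M_k^{(\beta)},
\qquad
M_i^{(\beta)}=\begin{cases} A_{\Sigma_i}, & \beta_i=1,\\ I_{n_i}, & \beta_i=0,\end{cases}
\]
and claim that $A_{\NEPS(\Sigma_1,\ldots,\Sigma_k;\beta)}=M_\beta$. The key verification is on the signs: the $((u_1,\ldots,u_k),(v_1,\ldots,v_k))$ entry of a Kronecker product equals $\prod_i M_i^{(\beta)}(u_i,v_i)$. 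When $\beta_i=0$ this factor is $1$ iff $u_i=v_i$ (matching the NEPS definition), and when $\beta_i=1$ it is $\sigma_i(u_iv_i)$ if $u_iv_i\in E_i$ and $0$ otherwise. Hence a nonzero entry occurs exactly for NEPS-edges, and its value is $\prod_{i:\beta_i=1}\sigma_i(u_iv_i)$, which is precisely the signature prescribed by NEPS for the single vector $\beta$. Summing over $\mathcal{B}$, and noting that the edge sets for distinct $\beta$'s are disjoint (the $\beta_i=0$ coordinates force equality of the $i$-th components, so two different $\beta$'s cannot produce the same edge), gives
\[
A_{\NEPS(\Sigma_1,\ldots,\Sigma_k;\mathcal{B})}\;=\;\sum_{\beta\in\mathcal{B}} M_\beta.
\]

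Next I would diagonalise simultaneously. Since each $A_{\Sigma_i}$ is real symmetric, pick an orthonormal eigenbasis $x_{i,1},\ldots,x_{i,n_i}$ with $A_{\Sigma_i}x_{i,j_i}=\lambda_{i,j_i}x_{i,j_i}$. Using the standard identity $(A_1\otimes\cdots\otimes A_k)(y_1\otimes\cdots\otimes y_k)=(A_1 y_1)\otimes\cdots\otimes(A_k y_k)$, the $\prod_i n_i$ tensors $x_{1,j_1}\otimes\cdots\otimes x_{k,j_k}$ satisfy
\[
M_\beta\,(x_{1,j_1}\otimes\cdots\otimes x_{k,j_k})
\;=\;\Bigl(\prod_{i=1}^{k}\lambda_{i,j_i}^{\beta_i}\Bigr)\,(x_{1,j_1}\otimes\cdots\otimes x_{k,j_k}),
\]
with the convention $\lambda^0=1$. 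Summing over $\beta\in\mathcal{B}$ shows that $x_{1,j_1}\otimes\cdots\otimes x_{k,j_k}$ is an eigenvector of $A_{\NEPS(\Sigma_1,\ldots,\Sigma_k;\mathcal{B})}$ with eigenvalue $\sum_{\beta\in\mathcal{B}}\prod_{i=1}^{k}\lambda_{i,j_i}^{\beta_i}$. Because tensor products of orthonormal bases form an orthonormal basis of the tensor product space, we have obtained $\prod_i n_i$ linearly independent eigenvectors, which is the full dimension of $A_{\NEPS(\Sigma_1,\ldots,\Sigma_k;\mathcal{B})}$, so the listed numbers exhaust its spectrum with multiplicities.

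The only delicate point I expect is the sign-bookkeeping in the first paragraph: one must be careful that, when $\beta_i=0$, the corresponding tensor factor $I_{n_i}$ contributes the neutral multiplicative sign $+1$ (which is consistent with the NEPS convention $\sigma_i(u_iv_i)^{0}=1$), and that different $\beta$'s really yield disjoint edge contributions so that the sum of the matrices $M_\beta$ has no sign conflicts. Once that combinatorial check is made, the spectral part is automatic from elementary Kronecker product theory.
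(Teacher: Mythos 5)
Your proof is correct: the decomposition $A_{\NEPS(\Sigma_1,\ldots,\Sigma_k;\mathcal{B})}=\sum_{\beta\in\mathcal{B}}M_1^{(\beta)}\otimes\cdots\otimes M_k^{(\beta)}$ together with tensor products of orthonormal eigenvectors, plus the observation that distinct $\beta$'s give disjoint edge sets (since $\beta$ is recoverable from which coordinates of an edge agree), is exactly the standard argument. The paper itself states this theorem without proof, quoting it from the reference of Germina, Hameed and Zaslavsky, and your argument is essentially the proof given there (the signed analogue of Cvetkovi\'c's Kronecker-product proof), so there is nothing to add.
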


%
%
\subsection{Rooted product}
The notion of rooted product of graphs which is defined by Godsil and Mckay in
\cite{GM}, plays an essential role for constructing signed graphs with symmetric spectrum. Let $\mathcal{H}$ be a sequence of rooted graphs $H_{1}, H_{2},\ldots,
H_{n}$ with the corresponding roots $r_1,r_2,\ldots,r_n$,
respectively. Recall $V=\{ v_{1}, v_{2},\ldots, v_{n}\}$, the vertex set  of  graph $G$. The {\it
rooted product of $G$ by $\mathcal{H}$},
denoted by $G[\mathcal{H}]$, is obtained from $G$ by
identifying each vertex $v_{i}$ by the root of $H_{i}$. If all the rooted graphs $H_i$ are isomorphic to a rooted graph $H$ then we simply write $G[H]$ instead of $G[\mathcal{H}]$. The matrix $A_{\lambda}G[\mathcal{H}]$ is defined as follows: $$A_{\lambda}(G[\mathcal{H}])=(a_{i,j}),$$ $$a_{i,j}=\left\{
            \begin{array}{ll}
             \chi(H_i,\lambda), & \hbox{$i=j$;} \\
              -A_G(i,j)\chi(H_i-r_i,\lambda), & \hbox{otherwise.}
            \end{array}
          \right.
$$
The characteristic polynomial of $G[\mathcal{H}]$
is given  by Godsil and McKay. The following is their result.
\begin{thm} \label{goma}  {\bf \cite{GM}}
$\chi(G[\mathcal{H}],\lambda)={\rm det } A_{\lambda}(G[\mathcal{H}]).$
\end{thm}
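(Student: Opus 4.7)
The plan is to compute $\chi(G[\mathcal{H}],\lambda)=\det(\lambda I - A_{G[\mathcal{H}]})$ by exploiting the block structure induced by the copies of $H_i$ and then applying a Schur-complement identity. First I would order the vertices of $G[\mathcal{H}]$ so that, for every $i$, the $m_i-1$ non-root vertices of $H_i$ appear in a contiguous block, and the $n$ roots $r_1,\ldots,r_n$ are listed last. In this ordering, $\lambda I - A_{G[\mathcal{H}]}$ has the $2\times 2$ block form
\[
M=\begin{pmatrix} D & E \\ E^{\!T} & F \end{pmatrix},
\]
where $D$ is block-diagonal with blocks $\lambda I - A_{H_i-r_i}$ (non-root vertices of distinct $H_i$'s are non-adjacent), $F$ is $n\times n$ with $F(i,i)=\lambda$ and $F(i,j)=-A_G(i,j)$ for $i\ne j$ (two roots are adjacent in $G[\mathcal{H}]$ exactly as $v_i,v_j$ are in $G$, with the same sign), and the $i$-th column of $E$ is nonzero only in the rows belonging to $H_i$, where it equals $-a_i$, with $a_i$ the adjacency vector of $r_i$ inside $H_i$ restricted to $V(H_i)\setminus\{r_i\}$.

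Next, on the Zariski-open set where every $\chi(H_i-r_i,\lambda)\ne 0$, the matrix $D$ is invertible, and the Schur complement formula gives
\[
\det M=\det D\cdot\det\bigl(F-E^{\!T}D^{-1}E\bigr),\qquad \det D=\prod_{i=1}^{n}\chi(H_i-r_i,\lambda).
\]
Because both $E$ and $D^{-1}$ respect the block decomposition by $H_i$, the matrix $E^{\!T}D^{-1}E$ is in fact \emph{diagonal}, with $(i,i)$ entry $a_i^{\!T}(\lambda I - A_{H_i-r_i})^{-1}a_i$. Applying the same Schur identity to $\chi(H_i,\lambda)=\det(\lambda I-A_{H_i})$, expanded around the root row/column, yields
\[
\chi(H_i,\lambda)=\chi(H_i-r_i,\lambda)\bigl(\lambda - a_i^{\!T}(\lambda I-A_{H_i-r_i})^{-1}a_i\bigr),
\]
so the $(i,i)$ entry of $F-E^{\!T}D^{-1}E$ simplifies to $\chi(H_i,\lambda)/\chi(H_i-r_i,\lambda)$, while the off-diagonal entries remain $-A_G(i,j)$.

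Finally, multiplying the $i$-th row of $F-E^{\!T}D^{-1}E$ by $\chi(H_i-r_i,\lambda)$ turns the diagonal into $\chi(H_i,\lambda)$ and the off-diagonal $(i,j)$ into $-A_G(i,j)\chi(H_i-r_i,\lambda)$; this is precisely $A_\lambda(G[\mathcal{H}])$. Hence
\[
\det A_\lambda(G[\mathcal{H}])=\prod_{i=1}^{n}\chi(H_i-r_i,\lambda)\cdot\det\bigl(F-E^{\!T}D^{-1}E\bigr)=\det D\cdot\det\bigl(F-E^{\!T}D^{-1}E\bigr)=\det M,
\]
which is $\chi(G[\mathcal{H}],\lambda)$. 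The only technical point is that the Schur step was carried out assuming $D$ is invertible; but both sides of the claimed identity are polynomials in $\lambda$ agreeing on the cofinite set where this holds, so they agree identically. I expect this continuity/genericity step to be the sole obstacle worth noting — everything else is organizing the block structure so that the roots-versus-non-roots partition turns the problem into two clean Schur computations, one for each $H_i$ and one global.
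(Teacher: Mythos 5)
Your argument is correct. Note, however, that the paper does not prove this statement at all: Theorem \ref{goma} is quoted from Godsil--McKay \cite{GM} (and its signed analogue, Proposition \ref{sgoma}, is likewise stated without proof), so there is no in-paper argument to compare with; what you have produced is a self-contained reconstruction. Your route --- order the vertices so that the non-root vertices of the $H_i$ come first and the roots last, write $\lambda I-A_{G[\mathcal{H}]}=\bigl(\begin{smallmatrix} D & E\\ E^{T} & F\end{smallmatrix}\bigr)$ with $D$ block-diagonal, take the Schur complement $\det M=\det D\cdot\det(F-E^{T}D^{-1}E)$, observe that $E^{T}D^{-1}E$ is diagonal because each column of $E$ is supported on a single block, identify the diagonal entries via the one-row Schur expansion $\chi(H_i,\lambda)=\chi(H_i-r_i,\lambda)\bigl(\lambda-a_i^{T}(\lambda I-A_{H_i-r_i})^{-1}a_i\bigr)$, and rescale row $i$ by $\chi(H_i-r_i,\lambda)$ to recover $A_\lambda(G[\mathcal{H}])$ --- is sound, and your closing genericity remark (both sides are polynomials in $\lambda$ agreeing off the finite set where some $\chi(H_i-r_i,\lambda)=0$) correctly removes the invertibility hypothesis; the only degenerate case worth a word is $H_i$ consisting of the root alone, where the empty block contributes $\chi(H_i-r_i,\lambda)=1$ and nothing changes. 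Two further observations: the row scaling by $\chi(H_i-r_i,\lambda)$ matches the asymmetric definition of $A_\lambda$ (off-diagonal entry $-A_G(i,j)\chi(H_i-r_i,\lambda)$ depends on the row index $i$), which your computation respects; and since no step uses $A_G(i,j)\in\{0,1\}$, the identical argument with $A_\Sigma(i,j)\in\{0,\pm1\}$ proves Proposition \ref{sgoma}, i.e.\ your proof actually supplies the verification that the paper leaves to the reader.
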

The special case where all the rooted graphs $H_i$ are isomorphic, is considered in \cite{sw}. We state their result in the following.
\begin{thm} \label{suw}  {\bf \cite{sw}} {\rm Let $H$ be a rooted graph with root $r$, then:}
$$\chi(G[H],\lambda)={\chi(H-r,\lambda)}^{n}\chi(G,\frac{\chi(H,\lambda)}{\chi(H-r,\lambda)}).$$
\end{thm}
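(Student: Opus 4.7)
The plan is to derive this identity directly from Theorem \ref{goma}, taking advantage of the uniformity that arises when every $H_i$ is a single rooted graph $H$.

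First I would specialize the matrix $A_\lambda(G[\mathcal{H}])$ from its definition. Setting $H_i=H$ and $r_i=r$ for every $i$, the diagonal entries all collapse to $\chi(H,\lambda)$ and each off-diagonal entry in position $(i,j)$ becomes $-A_G(i,j)\,\chi(H-r,\lambda)$. Consequently the whole matrix takes the compact form
$$A_\lambda(G[H])=\chi(H,\lambda)\,I_n-\chi(H-r,\lambda)\,A_G,$$
and by Theorem \ref{goma} its determinant is $\chi(G[H],\lambda)$.

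Next I would factor out $\chi(H-r,\lambda)$. Working in the field of rational functions in $\lambda$, one may rewrite
$$\chi(H,\lambda)\,I_n-\chi(H-r,\lambda)\,A_G=\chi(H-r,\lambda)\left(\frac{\chi(H,\lambda)}{\chi(H-r,\lambda)}\,I_n-A_G\right).$$
Taking determinants produces a scalar factor $\chi(H-r,\lambda)^n$, and the remaining determinant is exactly $\det(\mu I_n-A_G)=\chi(G,\mu)$ evaluated at $\mu=\chi(H,\lambda)/\chi(H-r,\lambda)$. Combining these two steps gives the claimed formula.

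The one point that calls for comment is that the right-hand side displays the rational expression $\chi(H,\lambda)/\chi(H-r,\lambda)$, while the left-hand side is manifestly a polynomial in $\lambda$. This is harmless: since $\chi(G,\mu)$ is a polynomial of degree $n$ in $\mu$, multiplying by $\chi(H-r,\lambda)^n$ exactly cancels every denominator introduced by the substitution, so the right-hand side is genuinely polynomial in $\lambda$. The identity therefore holds first as an equation of rational functions and, upon clearing denominators, as an equation of polynomials. I do not anticipate any further obstacle; the argument is a short algebraic manipulation layered on top of Theorem \ref{goma}, and it transfers verbatim from the ordinary to the signed setting because the signs enter only through the entries of $A_G$, which are untouched by the factorization.
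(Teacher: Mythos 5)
Your derivation is correct. Note, however, that the paper itself offers no proof of this statement: it is quoted verbatim from Schwenk's 1974 paper, so there is no internal argument to compare yours against. What you give is the standard modern route: specialize the Godsil--McKay determinant formula (Theorem \ref{goma}) to the uniform case $H_i=H$, observe that $A_\lambda(G[H])=\chi(H,\lambda)I_n-\chi(H-r,\lambda)A_G$, and factor $\chi(H-r,\lambda)$ out of each row over the field of rational functions in $\lambda$, which yields $\chi(H-r,\lambda)^n\chi\bigl(G,\chi(H,\lambda)/\chi(H-r,\lambda)\bigr)$. Your remark that the apparent denominator is cancelled because $\chi(G,\mu)$ has degree $n$ in $\mu$ correctly disposes of the only subtlety, so the identity holds as a rational-function identity and hence as a polynomial one. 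Two small historical/logical points: Schwenk's paper predates Godsil--McKay, so his original argument was necessarily different (recursive expansion of characteristic polynomials rather than a determinant specialization), and your closing comment about transferring to the signed setting is extraneous to this particular theorem, which concerns ordinary graphs --- though it is exactly the specialization the present paper exploits later via Proposition \ref{sgoma}.
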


\subsubsection{Rooted product of signed graphs}
We first generalize the notion of rooted product to signed graphs. Let $\Sigma=(G,\sigma)$ be a signed graph on $n$ vertices, and $\tilde{\mathcal{H}}$ be a list of $n$ rooted signed graphs say $\Pi_i=(H_i,\sigma_i)$, $i=1,\ldots,n$. Their corresponding rooted product, that is $\Sigma[\tilde{\mathcal{H}}]$, is defined to be the signed graph obtained by identifying the vertex $v_{i}$ of $\Sigma$ by the root of $\Pi_i$, for $i=1,2,\ldots,n$. If all signed graphs of the list $\tilde{\mathcal{H}}$ are isomorphic to the signed graph $\Pi$, then we simply denote the rooted product of $\Sigma$ and $\tilde{\mathcal{H}}$ by $\Sigma[\Pi]$.
We state the following proposition without proof. It is a similar result as Theorem \ref{goma} where the graphs are replaced with signed graphs. Note that it can be verified by a similar method of Godsil-McKay in their proof of Theorem \ref{goma}, see \cite{GM}. The difference is that we must pay attention to the edge signs. In the following let $A_{\lambda}(\Sigma[\tilde{\mathcal{H}}])$ be the bellow matrix: $$A_{\lambda}(\Sigma[\tilde{\mathcal{H}}])=(a_{i,j}),$$ $$a_{i,j}=\left\{
            \begin{array}{ll}
             \chi(\Pi_i,\lambda), & \hbox{$i=j$;} \\
              -A_{\Sigma}(i,j)\chi(\Pi_i-r_i,\lambda), & \hbox{otherwise.}
            \end{array}
          \right.
$$
\begin{prop} \label{sgoma}
$\chi(\Sigma[\tilde{\mathcal{H}}],\lambda)={\rm det } A_{\lambda}(\Sigma[\tilde{\mathcal{H}}]).$
\end{prop}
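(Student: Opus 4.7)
The plan is to imitate the Godsil--McKay proof of Theorem \ref{goma}, with the only new ingredient being careful tracking of the signs $\sigma(v_iv_j)$ and $\sigma_i(r_iu)$ through the algebra. First, I would order the vertices of $\Sigma[\tilde{\mathcal{H}}]$ so that the roots $r_1,\ldots,r_n$ appear first and the non-root vertices follow, grouped block by block. With this ordering, the adjacency matrix takes the $2\times 2$ block form
\[
A_{\Sigma[\tilde{\mathcal{H}}]}=\begin{pmatrix} A_{\Sigma} & C \\ C^{T} & \mathrm{diag}(A_{\Pi_1-r_1},\ldots,A_{\Pi_n-r_n})\end{pmatrix},
\]
where $C$ is the matrix whose $i$-th row is the signed adjacency vector $a_i^{T}$ of $r_i$ in $\Pi_i$, supported inside block $i$ and zero elsewhere. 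The key structural fact is that non-root vertices of different blocks never share edges, which will make certain $C\,D\,C^{T}$-type products block-diagonal.

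Next, I would evaluate $\chi(\Sigma[\tilde{\mathcal{H}}],\lambda)=\det(\lambda I-A_{\Sigma[\tilde{\mathcal{H}}]})$ via the Schur complement with respect to the lower-right block. Setting $N_i=\lambda I-A_{\Pi_i-r_i}$, whose determinant is $\chi(\Pi_i-r_i,\lambda)$, the support-disjointness mentioned above forces $C\,\mathrm{diag}(N_i^{-1})\,C^{T}$ to be the diagonal matrix with entries $a_i^{T}N_i^{-1}a_i$. Applying the Schur-complement identity inside each individual $\Pi_i$ (viewing $\lambda I-A_{\Pi_i}$ with the root placed first) yields the clean quotient
\[
\lambda-a_i^{T}N_i^{-1}a_i=\frac{\chi(\Pi_i,\lambda)}{\chi(\Pi_i-r_i,\lambda)}.
\]
Consequently $\chi(\Sigma[\tilde{\mathcal{H}}],\lambda)$ equals $\prod_i\chi(\Pi_i-r_i,\lambda)$ times the determinant of the $n\times n$ matrix whose $(i,i)$-entry is $\chi(\Pi_i,\lambda)/\chi(\Pi_i-r_i,\lambda)$ and whose $(i,j)$-entry for $i\neq j$ is $-A_{\Sigma}(i,j)$. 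Scaling row $i$ by $\chi(\Pi_i-r_i,\lambda)$ transforms this matrix into exactly $A_{\lambda}(\Sigma[\tilde{\mathcal{H}}])$, while the accompanying product of scalings cancels the prefactor $\prod_i\chi(\Pi_i-r_i,\lambda)$. The claimed identity follows.

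The main obstacle, as the author warns, is purely sign bookkeeping: the vector $a_i$ contains the values $\sigma_i(r_iu)\in\{\pm 1\}$ and $A_{\Sigma}(i,j)=\sigma(v_iv_j)$, so one must verify that these signs propagate consistently through the Schur-complement manipulation and never mix across blocks. Because $N_i^{-1}$ is computed entirely within $\Pi_i$ and the supports of the $a_i$ are disjoint, no cross-sign interactions arise, and the argument goes through verbatim with signed adjacency matrices in place of $0/1$ matrices. A secondary point is that the Schur-complement step requires each $N_i$ to be invertible; since both $\chi(\Sigma[\tilde{\mathcal{H}}],\lambda)$ and $\det A_{\lambda}(\Sigma[\tilde{\mathcal{H}}])$ are polynomials in $\lambda$, the standard density argument---verify the identity on the cofinite set of $\lambda$ avoiding every zero of every $\chi(\Pi_i-r_i,\lambda)$, then extend by polynomiality---closes the proof.
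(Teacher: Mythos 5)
Your argument is correct, and it actually supplies something the paper deliberately omits: Proposition \ref{sgoma} is stated without proof, with only the remark that the Godsil--McKay method for Theorem \ref{goma} carries over once edge signs are tracked. Your Schur-complement route is a complete and valid substitute. The block form is right (the top-left block is exactly $A_{\Sigma}$, since roots in distinct copies $\Pi_i$ are joined only through edges of $\Sigma$), the disjoint supports of the rows of $C$ do force $C\,\mathrm{diag}(N_i^{-1})\,C^{T}$ to be diagonal with entries $a_i^{T}N_i^{-1}a_i$, the per-block Schur identity gives $\lambda-a_i^{T}N_i^{-1}a_i=\chi(\Pi_i,\lambda)/\chi(\Pi_i-r_i,\lambda)$, and rescaling row $i$ by $\chi(\Pi_i-r_i,\lambda)$ produces precisely the matrix $A_{\lambda}(\Sigma[\tilde{\mathcal{H}}])$ as defined (the deletion polynomial attached to the off-diagonal entries of row $i$ is indeed the one for $\Pi_i$, matching your scaling), with the scaling factors cancelling the prefactor $\prod_i\chi(\Pi_i-r_i,\lambda)$. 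Your two supplementary points are exactly the ones that need saying: the sign bookkeeping is automatic because nothing in the computation uses that the matrix entries are $0$ or $1$, and the invertibility of the $N_i$ is handled by verifying the polynomial identity off the finitely many zeros of the $\chi(\Pi_i-r_i,\lambda)$ and extending by polynomiality. Compared with deferring to the original Godsil--McKay argument for unsigned graphs, your linear-algebraic derivation is self-contained, works verbatim for arbitrary symmetric matrices in place of adjacency matrices, and makes transparent why the generalization to signed graphs is costless; it would serve perfectly well as the missing proof of the proposition.
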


%
\section{Main Results}
Our main result is consisting of two independent concepts. At first we present methods for constructing signed graphs with symmetric spectrum. Afterwards we present a method for constructing switching non-isomorphic cospectral signed graphs. There is an obvious method of constructing non-sign-symmetric graphs with symmetric spectrum which yields disconnected signed graphs. The method follows. Let $\Gamma_1$, $\Gamma_2$ be two non-isomorphic, non-bipartite, cospectral graphs, then the signed graph $\Sigma=\Gamma_1^+\cup \Gamma_2^-$ (disjoint union of $\Gamma_1^+$ and ${\Gamma_2}^-$) has symmetric spectrum but it is not sign-symmetric. We are interested in connected examples.

\subsection{Signed graphs with symmetric spectrum: NEPS}
The notion of NEPS of signed graphs can be applied for constructing signed graphs with symmetric spectrum. In \cite{CVE1}, and some of its references an analogous problem for ordinary graphs is considered. Let $\tilde{\Sigma}:=\Sigma_1, \Sigma_2,..., \Sigma_k$ be a list of signed graphs. We say that $\tilde{\Sigma}$ is \textit{symmetric} if for any $i=1,\ldots,k$, there is a unique $j$ for which $\Spec(\Sigma_i)=-\Spec(\Sigma_j)$, in which case we write $j=i^-$. For a symmetric list of signed graphs $\tilde{\Sigma}$, we say that a base $\mathcal{B}$ is \textit{compatible} with $\tilde{\Sigma}$ if for each $\beta\in\mathcal{B}$ there is a unique $\beta'\in\mathcal{B}$ so that the following equality holds: $$\{j: \beta'_j=1\}=\{i^-: \beta_i=1\}.$$
In the following theorem we present a sufficient condition for NEPS of signed graphs to admit a symmetric spectrum.
\begin{thm} {\rm
 Let $\Sigma_1,\Sigma_2,\ldots,\Sigma_k$ be given signed graphs. For $\mathcal{B}\subseteq \{0,1\}^k$, the signed graph $\Theta=\NEPS(\Sigma_1, \Sigma_2,..., \Sigma_k ; \mathcal{B})$ admits a symmetric spectrum if both of the followings are satisfied.

\begin{itemize}
\item The list $\Sigma_1,\Sigma_2,\ldots,\Sigma_k$ is symmetric and the base $\mathcal{B}$ is compatible with it.
\item The function $p$ defined by $p(x_1,\ldots,x_k)=\sum_{\beta\in \mathcal{B}}x_{1}^{\beta_1}\ldots x_{k}^{\beta_k}$ is odd, that is,  $$p(-x_1,\ldots,-x_k)=-p(x_1,\ldots,x_k).$$
 \end{itemize}
 }
\end{thm}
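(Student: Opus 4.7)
The plan is to apply Theorem \ref{zas1}, which expresses each eigenvalue of $\Theta$ as $\lambda_J := p(\lambda_{1,j_1},\ldots,\lambda_{k,j_k})$, where $J=(j_1,\ldots,j_k)$ ranges over multi-indices. Symmetry of $\Spec(\Theta)$ is equivalent to the existence of a multiplicity-preserving pairing on eigenvalues sending $\mu$ to $-\mu$, so I would construct an explicit bijection $J\mapsto J'$ on multi-indices for which $\lambda_{J'}=-\lambda_J$; counting each pair with its natural multiplicity then yields the conclusion.

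The first hypothesis provides an involution $i\mapsto i^-$ on factor-indices and, for each $i$, a bijection $\tau_i:\{1,\ldots,n_i\}\to\{1,\ldots,n_{i^-}\}$ satisfying $\lambda_{i^-,\tau_i(j)}=-\lambda_{i,j}$. Using these I would set $j_i':=\tau_{i^-}(j_{i^-})$, so that $\lambda_{i,j_i'}=-\lambda_{i^-,j_{i^-}}$ and $J\mapsto J'$ is a bijection on the entire multi-index set. Substituting into $\lambda_{J'}$ and reindexing the inner product by $\ell=i^-$ gives
$$\lambda_{J'}=\sum_{\beta\in\mathcal{B}}\prod_{\ell:\beta_{\ell^-}=1}(-\lambda_{\ell,j_\ell}).$$
The compatibility condition says precisely that the map $\beta\mapsto\beta^*$ with $\beta^*_\ell=\beta_{\ell^-}$ is a well-defined involution of $\mathcal{B}$ onto itself. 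Re-indexing the outer sum by $\beta^*$ yields $\lambda_{J'}=p(-\lambda_{1,j_1},\ldots,-\lambda_{k,j_k})$, and oddness of $p$ then collapses this to $-\lambda_J$.

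The principal obstacle is bookkeeping: I must simultaneously permute two different objects, the factor-indices via $i\mapsto i^-$ and the base via $\beta\mapsto\beta^*$, and verify that both are genuine involutions (the uniqueness built into each hypothesis is what forces this), so that the induced map $J\mapsto J'$ is a true bijection of the multi-index set rather than merely a surjection. Once the involutions are in place, the sign computation is essentially automatic from oddness of $p$; the delicate point is that none of the three hypotheses can be dispensed with, since without the $\tau_i$ we cannot realise $-\lambda_{i^-,j_{i^-}}$ as an eigenvalue of $\Sigma_i$, without compatibility the re-indexed sum is not a legitimate evaluation of $p$, and without oddness of $p$ the final sign flip fails.
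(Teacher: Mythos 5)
Your proposal is correct and takes essentially the same route as the paper: invoke Theorem \ref{zas1}, use the symmetric-list hypothesis to replace each $-\lambda_{i,j_i}$ by an eigenvalue of $\Sigma_{i^-}$, use compatibility to re-index the sum over $\mathcal{B}$ via $\beta\mapsto\beta'$, and finish with oddness of $p$. Your explicit bijection $J\mapsto J'$ on multi-indices is in fact slightly more careful than the paper's write-up, which only argues that $-\lambda\in\Spec(\Theta)$ and leaves the preservation of multiplicities implicit.
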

\begin{proof} {Note that by Theorem \ref{zas1}, any eigenvalue $\lambda$ of $\Theta$ is equal to $p(\lambda_{1j_1},\ldots,\lambda_{kj_k})$ for some $1\leq j_1\leq  n_1,\ldots, 1\leq j_k \leq n_k$. The function $p$ is an odd function, so $$p(-\lambda_{1j_1},\ldots,-\lambda_{kj_k})=-p(\lambda_{1j_1},\ldots,\lambda_{kj_k})=-\lambda.$$ Now we prove that $p(-\lambda_{1j_1},\ldots,-\lambda_{kj_k})$ is an eigenvalue of $\Theta$. Note that $\Spec(\Sigma_i)=-\Spec(\Sigma_{i^-})$, so $-\lambda_{ij_i}=\lambda_{i^-j'_{i^-}}$, for some $1\leq j'_{i^-}\leq n_i$. This implies the following equality: $$p(-\lambda_{1j_1},\ldots,-\lambda_{kj_k})= p(\lambda_{1^-j'_{1^-}},\ldots,\lambda_{k^-j'_{k^-}}).$$

By the definition we have the following: $$p(\lambda_{1^-j'_{1^-}},\ldots,\lambda_{k^-j'_{k^-}})=\sum_{\beta\in \mathcal{B}}\lambda_{1^-j'_{1^-}}^{\beta_1}\ldots \lambda_{k^-j'_{k^-}}^{\beta_k}.$$

Now consider the summand $\lambda_{1^-j'_{1^-}}^{\beta_1}\ldots \lambda_{k^-j'_{k^-}}^{\beta_k}.$ It is equal to $\prod_{i:\beta_i=1}\lambda_{i^-j'_{i^-}}$. On the other hand by definition we have $\{i: \beta'_i=1\}=\{i^-: \beta_i=1\}.$  This yields, $$\prod_{i:\beta_i=1}\lambda_{i^-j'_{i^-}}=\prod_{i:\beta'_i=1}\lambda_{ij'_{i}},$$
which finally turns out that $$\sum_{\beta\in \mathcal{B}}\lambda_{1^-j'_{1^-}}^{\beta_1}\ldots \lambda_{k^-j'_{k^-}}^{\beta_k}=\sum_{\beta'\in \mathcal{B}}\lambda_{1j'_{1}}^{\beta'_1}\ldots \lambda_{kj'_{k}}^{\beta'_k}.$$
 But $\beta$'s and $\beta'$'s are in a one to one correspondence, thus $$\sum_{\beta'\in \mathcal{B}}\lambda_{1j'_{1}}^{\beta'_1}\ldots \lambda_{kj'_{k}}^{\beta'_k}=p(\lambda_{1j'_{1}},\ldots, \lambda_{kj'_{k}}),$$ which is an eigenvalue of $\Theta$ by Theorem \ref{zas1}. Therefore $-\lambda$ is an eigenvalue of $\Theta$, as desired.}
\end{proof}
We are now ready to introduce some examples of signed graphs with symmetric spectrum based on NEPS of signed graphs.

\textbf{Example}: Let $\Gamma_1,\Gamma_2$ be two non-isomorphic cospectral graphs (graphs with the same adjacency spectrum). Then by the above theorem the graph $\NEPS(\Gamma_1^+,\Gamma_2^-;\{(0,1),(1,0)\})$, which is in fact the Cartesian product $\Gamma_1^+\times\Gamma_2^-$ of the mentioned signed graphs, admits a symmetric spectrum.
We may choose $\Gamma_1,\Gamma_2$ so that the resulting signed graph becomes non-sign symmetric. We don't go through details, but we refer the reader to apply methods in \cite{IMW}.


\subsection{Signed graphs with symmetric spectrum: Rooted product}
We have the following result on the rooted product of signed graphs.
\begin{prop} {\rm Let $\Pi$ be a rooted signed graph with root $r$ and $\Sigma$ be a signed graph.
If all the signed graphs $\Sigma, \Pi, \Pi-r$, have symmetric spectrum, then the signed graph $\Sigma[\Pi]$ has also symmetric spectrum.}
\end{prop}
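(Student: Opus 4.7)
The plan is to convert each of the three symmetric-spectrum hypotheses into a polynomial parity identity and then feed them into a suitable signed version of Theorem~\ref{suw}. Recall first that a signed graph $\Gamma$ of order $N$ has symmetric spectrum if and only if
$$\chi(\Gamma,-\lambda)=(-1)^{N}\chi(\Gamma,\lambda),$$
since this is precisely the statement that the multiset of eigenvalues is closed under negation (both directions are checked by comparing orders of vanishing at $\pm\mu$). Writing $n=|V(\Sigma)|$ and $m=|V(\Pi)|$, so that $|V(\Pi-r)|=m-1$ and $|V(\Sigma[\Pi])|=nm$, the three hypotheses read $\chi(\Sigma,-\lambda)=(-1)^{n}\chi(\Sigma,\lambda)$, $\chi(\Pi,-\lambda)=(-1)^{m}\chi(\Pi,\lambda)$, and $\chi(\Pi-r,-\lambda)=(-1)^{m-1}\chi(\Pi-r,\lambda)$.

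Next I would derive the signed counterpart of Theorem~\ref{suw} from Proposition~\ref{sgoma}. When every factor in $\tilde{\mathcal{H}}$ equals the same rooted signed graph $\Pi$, the matrix displayed just before Proposition~\ref{sgoma} collapses to
$$A_{\lambda}(\Sigma[\Pi])=\chi(\Pi,\lambda)\,I_{n}-\chi(\Pi-r,\lambda)\,A_{\Sigma}.$$
Factoring $\chi(\Pi-r,\lambda)$ out of each row and recognizing the remaining determinant as $\chi(\Sigma,\cdot)$ evaluated at the quotient yields
$$\chi(\Sigma[\Pi],\lambda)=\chi(\Pi-r,\lambda)^{n}\,\chi\!\left(\Sigma,\,\frac{\chi(\Pi,\lambda)}{\chi(\Pi-r,\lambda)}\right),$$
a verbatim signed analog of Theorem~\ref{suw}.

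With this identity in hand, the rest of the argument is sign bookkeeping, which I expect to be the only substantive step. Substituting $-\lambda$, the prefactor becomes $(-1)^{n(m-1)}\chi(\Pi-r,\lambda)^{n}$, while the second and third parity identities force
$$\frac{\chi(\Pi,-\lambda)}{\chi(\Pi-r,-\lambda)}=\frac{(-1)^{m}}{(-1)^{m-1}}\cdot\frac{\chi(\Pi,\lambda)}{\chi(\Pi-r,\lambda)}=-\,\frac{\chi(\Pi,\lambda)}{\chi(\Pi-r,\lambda)}.$$
Since $\chi(\Sigma,\cdot)$ has degree $n$ and satisfies the first parity identity, negating its argument contributes a further $(-1)^{n}$. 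Multiplying everything, the accumulated sign is $(-1)^{n(m-1)+n}=(-1)^{nm}=(-1)^{|V(\Sigma[\Pi])|}$, so $\chi(\Sigma[\Pi],-\lambda)=(-1)^{nm}\chi(\Sigma[\Pi],\lambda)$, which is exactly the symmetric-spectrum criterion for $\Sigma[\Pi]$.

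The only delicate point, which I do not expect to be a genuine obstacle, is that the quotient $\chi(\Pi,\lambda)/\chi(\Pi-r,\lambda)$ is only a rational function of $\lambda$; however both sides of the boxed identity are polynomials, so the rational manipulation is legitimate on the open dense set where the denominator is non-zero and then extends by polynomial continuation. All the real content of the argument is compressed into the parity computation above.
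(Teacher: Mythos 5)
Your proposal is correct, and its skeleton coincides with the paper's: both start from Proposition \ref{sgoma}, observe that $A_{\lambda}(\Sigma[\Pi])=\chi(\Pi,\lambda)I_n-\chi(\Pi-r,\lambda)A_{\Sigma}$, and factor the determinant into $\chi(\Pi-r,\lambda)^{n}\,\chi\bigl(\Sigma,\chi(\Pi,\lambda)/\chi(\Pi-r,\lambda)\bigr)$, i.e.\ the signed analogue of Theorem \ref{suw}. Where you differ is the finishing step. The paper splits into cases according to the parity of $|V(\Pi)|$, writes $\chi(\Pi,\lambda)$ and $\chi(\Pi-r,\lambda)$ explicitly as even/odd polynomials, and then argues root by root that $P(\lambda_0)=0$ if and only if $P(-\lambda_0)=0$, using that $-\mu_0\in\Spec(\Sigma)$ whenever $\mu_0$ is. You instead encode each hypothesis as the parity identity $\chi(\Gamma,-\lambda)=(-1)^{N}\chi(\Gamma,\lambda)$ and carry out a single sign computation, obtaining the global identity $\chi(\Sigma[\Pi],-\lambda)=(-1)^{nm}\chi(\Sigma[\Pi],\lambda)$. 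This buys you two things: no case distinction on the parity of $|V(\Pi)|$ (the paper only writes out one case and asserts the other is similar), and, more substantively, control of multiplicities --- the paper's argument as written only shows the zero set of $\chi(\Sigma[\Pi],\cdot)$ is closed under negation, whereas symmetric spectrum is a statement about the eigenvalue multiset, which your polynomial identity delivers directly. Your handling of the rational-function substitution is also fine; an even cleaner way to dispose of it is to note that $\chi(\Pi-r,\lambda)^{n}\chi\bigl(\Sigma,\chi(\Pi,\lambda)/\chi(\Pi-r,\lambda)\bigr)=\prod_{i}\bigl(\chi(\Pi,\lambda)-\mu_i\,\chi(\Pi-r,\lambda)\bigr)$, with $\mu_1,\ldots,\mu_n$ the eigenvalues of $A_{\Sigma}$, so that every manipulation takes place among honest polynomials.
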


\begin{proof}
{By Proposition \ref{sgoma}, the characteristic polynomial of the signed graph  $\Sigma[\Pi]$ is equal to the determinant of the matrix $A_{\lambda}(\Sigma[\Pi])$. Note each of the diagonal entries of $A_{\lambda}(\Sigma[\Pi])$ is $\chi(\Pi,\lambda)$. And entries elsewhere are equal to $-A_{\Sigma}(i,j)\chi(\Pi-r,\lambda)$, by the definition. Since $\Pi$ has symmetric spectrum, one of the following two cases may occur for $\Pi$.
\begin{itemize}
  \item $\Pi$ has an even number of vertices and $\chi(\Pi,\lambda)=\sum_{i=0}^{\frac{n}{2}}{{a_{2i}\lambda}^{2i}}$.
  \item It has an odd number of vertices and $\chi(\Pi,\lambda)=\lambda\sum_{i=0}^{\frac{n-1}{2}}{{b_{2i}\lambda}^{2i}}$.
\end{itemize}
We continue the proof concerning the first case, the second one can be proved with a similar approach. By the assumption the matrix $A_{\lambda}(\Sigma[\Pi])$ is equal to the following matrix. $$\chi(\Pi,\lambda)I_n-\chi(\Pi-r,\lambda)A_{\Sigma},$$
where $I_n$ is the $n\times n$ identity matrix. But we supposed that $\Pi$ has even number of vertices, so $\Pi-r$ has an odd number of vertices and we may assume the following equalities. $$\chi(\Pi,\lambda)I_n-\chi(\Pi-r,\lambda)A_{\Sigma}=(\sum_{i=0}^{\frac{n}{2}}{{a_{2i}\lambda}^{2i}})I_n-(\lambda\sum_{i=0}^{\frac{n-2}{2}}{{b_{2i}\lambda}^{2i}})A_{\Sigma}$$

By dividing each of the rows of the above matrix by the polynomial $\lambda\sum_{i=0}^{\frac{n-2}{2}}{{b_{2i}\lambda}^{2i}}$, determinant of the matrix $\chi(\Pi,\lambda)I_n-\chi(\Pi-r,\lambda)A_{\Sigma}$ will be equal to the following expression. $$P(\lambda)=(\lambda\sum_{i=0}^{\frac{n-2}{2}}{{b_{2i}\lambda}^{2i}})^n\det(\frac{\sum_{i=0}^{\frac{n}{2}}{{a_{2i}\lambda}^{2i}}}{\lambda\sum_{i=0}^{\frac{n-2}{2}}{{b_{2i}\lambda}^{2i}}}I_n-A_{\Sigma})$$  Now it suffices to prove that for a real number $\lambda_0$, $P(\lambda_0)=0$ if and only if $P(-\lambda_0)=0$. Suppose that $P(\lambda_0)=0$ for a real number $\lambda_0$ then at least one of the followings holds. $$\lambda_0\sum_{i=0}^{\frac{n-2}{2}}{{b_{2i}\lambda_0}^{2i}}=0, \textrm{ or }\hspace{1cm}(1)$$ $$\det(\frac{\sum_{i=0}^{\frac{n}{2}}{{a_{2i}\lambda_0}^{2i}}}{\lambda_0\sum_{i=0}^{\frac{n-2}{2}}{{b_{2i}\lambda_0}^{2i}}}I_n-A_{\Sigma})=0\hspace{1cm}(2)$$
If (1) holds then the assertion follows easily, so suppose (2) holds. Note that the equality (2) holds if and only if for some eigenvalue $\mu_0$ of $A_{\Sigma}$ the following equality carries:
$$\frac{\sum_{i=0}^{\frac{n}{2}}{{a_{2i}\lambda_0}^{2i}}}{\lambda_0\sum_{i=0}^{\frac{n-2}{2}}{{b_{2i}\lambda_0}^{2i}}}=\mu_0.$$
Hence we have the following: $$\frac{\sum_{i=0}^{\frac{n}{2}}{{a_{2i}(-\lambda_0)}^{2i}}}{-\lambda_0\sum_{i=0}^{\frac{n-2}{2}}{{b_{2i}(-\lambda_0)}^{2i}}}=-\mu_0.$$
On the other hand $A_{\Sigma}$ has symmetric spectrum. Thus $-\mu_0$ is an eigenvalue of $A_{\Sigma}$, hence $P(-\lambda_0)=0$. This implies that the signed graph $\Sigma[\Pi]$ admits a symmetric spectrum.  }
\end{proof}

\begin{lem}  {\rm Let $\Pi$ be a signed bipartite graph with root $r$. The signed graph $SK_8[\Pi]$ is not sign-symmetric.}
\end{lem}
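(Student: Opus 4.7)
The plan is to identify the eight root copies of $v_1,\ldots,v_8$ inside $\Sigma:=SK_8[\Pi]$ by a combinatorial invariant---membership in a $3$-cycle---so that any switching isomorphism $\Sigma\to -\Sigma$ must permute these eight vertices among themselves; the induced permutation, together with the accompanying switching data restricted to the root set, will then witness $SK_8\cong -SK_8$, contradicting the non-sign-symmetry of $SK_8$ recalled in the introduction.

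The first step is to show that the roots of $\Sigma$ are exactly the vertices that lie on some triangle. Each root $v_i$ lies on many triangles inherited from $K_8$. Conversely, a non-root vertex $u$ of $\Sigma$ has all of its neighbours inside the single copy of $\Pi$ containing $u$; hence any triangle through $u$ would be a triangle of $\Pi$, which is impossible because $\Pi$ has bipartite ground. Thus the root set is an automorphism-invariant subset of the ground graph $K_8[H]$, where $H$ denotes the ground of $\Pi$.

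Now suppose for contradiction that $\Sigma\cong -\Sigma$. By definition there exist an automorphism $\varphi$ of $K_8[H]$ and a switching function $\theta:V\to\{\pm 1\}$ such that, writing $\sigma$ for the sign function of $\Sigma$,
\[
\sigma(uv) \;=\; -\,\theta(\varphi(u))\,\theta(\varphi(v))\,\sigma(\varphi(u)\varphi(v))
\]
for every edge $uv$ of the ground. By the previous step, $\varphi$ restricts to a permutation $\pi$ of $\{v_1,\ldots,v_8\}$, so $\varphi(v_i)=v_{\pi(i)}$ for $i=1,\ldots,8$. Set $\mu_i:=\theta(v_{\pi(i)})$. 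Since, on the root-root edges, the sign function of $\Sigma$ (respectively $-\Sigma$) agrees with that of $SK_8$ (respectively $-SK_8$), specialising the displayed identity to edges $v_iv_j$ of the $SK_8$-skeleton gives
\[
\sigma_{SK_8}(v_iv_j) \;=\; \mu_i\mu_j\,\sigma_{-SK_8}(v_{\pi(i)}v_{\pi(j)})
\]
for all $1\le i<j\le 8$. This is precisely a switching isomorphism $SK_8\to -SK_8$, so $SK_8$ would be sign-symmetric, contradicting the hypothesis.

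The only delicate point is the triangle characterisation of the roots, and it is exactly where the bipartiteness of $\Pi$ enters; once the root set is canonically identified, everything else is a direct transfer of the switching isomorphism to the skeleton, and no information about $\theta$ on non-root vertices is ever required, because those vertices are not endpoints of any $SK_8$-skeleton edge.
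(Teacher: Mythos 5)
Your proof is correct and follows essentially the same route as the paper: you pin down the $K_8$ skeleton inside $SK_8[\Pi]$ by a structural invariant forced by the bipartiteness of the ground of $\Pi$ (you use membership in a triangle, the paper uses uniqueness of the $8$-clique --- interchangeable here, since any triangle, and hence any large clique, avoids the pendant copies of $\Pi$), and then restrict the hypothetical map to that skeleton to get $SK_8\cong -SK_8$, contradicting the Et-Taoui--Fruchard example recalled in the introduction. The one substantive difference is that you carry the switching function $\theta$ along and restrict it to the roots, so your argument genuinely excludes a \emph{switching} isomorphism $SK_8[\Pi]\cong -SK_8[\Pi]$; the paper's proof as written only speaks of a plain isomorphism $\varphi$ and concludes that $SK_8[\Pi]$ and $-SK_8[\Pi]$ are not isomorphic, which by itself is weaker than ``not sign-symmetric'' as defined via switching isomorphism. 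In that respect your version is the more complete one, since it uses exactly the hypothesis that $SK_8$ is not switching isomorphic to $-SK_8$, and no further ideas are needed.
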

\begin{proof}
 { Note that both the signed graphs $SK_8[\Pi]$ and $-SK_8[\Pi]$ have a unique $8$-clique in their grounds, i.e $K_8$. Let $\varphi$ be an isomorphism between $SK_8[\Pi]$ and $-SK_8[\Pi]$, then the image of $SK_8$ under $\varphi$ must be $-SK_8$. This means that $SK_8$ and $-SK_8$ are isomorphic, which is impossible. Hence the two signed graphs $SK_8[\Pi]$ and $-SK_8[\Pi]$ are not isomorphic.   }
\end{proof}

\begin{con} {\rm The signed graph $SK_8[\Pi]$ for a signed bipartite graph $\Pi$ or even for a signed graph with symmetric spectrum, provides an infinite list of signed graphs with symmetric spectrum. Note that the center can be replaced with any non-sign-symmetric signed graph with symmetric spectrum. Hence the method can be applied to construct infinitely many examples of signed graphs with the mentioned property.}
\end{con}
\subsection{Constructing cospectral signed graphs}
The notion of rooted product of graphs can be applied to construct cospectral graphs as well, see \cite{GRT,sw} for example. We extend the method for signed graphs. Accordingly, two signed graphs $\Sigma_1$ and $\Sigma_2$ are called \textit{cospectral} if they share the same adjacency spectrum. Two rooted graphs $G,H$ are called \textit{cospectrally rooted} if they are cospectral and the subgraphs $G-u$ and $H-v$ are also cospectral, where $u,v$ are the corresponding roots.

\begin{prop} \label{cos}
  {\rm Let $\Sigma$ be an arbitrary signed graph and $G,H$ be two non-isomorphic cospectrally rooted graphs with roots $u,v$ respecttively. Then the signed graphs $\Sigma[G^+]$ and $\Sigma[H^+]$ are cospectral.}
\end{prop}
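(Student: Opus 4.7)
The plan is to reduce the cospectrality of $\Sigma[G^+]$ and $\Sigma[H^+]$ directly to an entrywise equality of the two matrices produced by Proposition \ref{sgoma}.

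First, I would write down the matrices $A_\lambda(\Sigma[G^+])$ and $A_\lambda(\Sigma[H^+])$ explicitly using the formula from Proposition \ref{sgoma}. Since in this setting every rooted signed graph $\Pi_i$ in the list is the same (namely $G^+$ with root $u$ in one case, $H^+$ with root $v$ in the other), the diagonal entries are the constant polynomials $\chi(G^+,\lambda)$ and $\chi(H^+,\lambda)$ respectively, and the off-diagonal entry in position $(i,j)$ is $-A_\Sigma(i,j)\chi(G^+-u,\lambda)$ in one case and $-A_\Sigma(i,j)\chi(H^+-v,\lambda)$ in the other. Crucially, the same $\Sigma$ (hence the same adjacency matrix $A_\Sigma$, with its signs) is used in both matrices.

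Next, I would invoke the fact that for any graph $\Gamma$ the adjacency matrix of $\Gamma^+$ coincides with the ordinary adjacency matrix of $\Gamma$, so $\chi(\Gamma^+,\lambda)=\chi(\Gamma,\lambda)$, and similarly $\chi(\Gamma^+-w,\lambda)=\chi(\Gamma-w,\lambda)$ for any vertex $w$. Applying this to $G^+$ and $H^+$ and then using the hypothesis that $(G,u)$ and $(H,v)$ are cospectrally rooted, I obtain
\[
\chi(G^+,\lambda)=\chi(G,\lambda)=\chi(H,\lambda)=\chi(H^+,\lambda),
\]
\[
\chi(G^+-u,\lambda)=\chi(G-u,\lambda)=\chi(H-v,\lambda)=\chi(H^+-v,\lambda).
\]

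Combining these two identities, the defining data of the two matrices coincide entry by entry, so $A_\lambda(\Sigma[G^+])=A_\lambda(\Sigma[H^+])$. Taking determinants and applying Proposition \ref{sgoma} to each side yields $\chi(\Sigma[G^+],\lambda)=\chi(\Sigma[H^+],\lambda)$, which is exactly cospectrality. There is no real obstacle here: once Proposition \ref{sgoma} is available, the proof is essentially a one-line consequence of the cospectrally-rooted assumption, and the only thing to note carefully is that taking the all-positive signing preserves characteristic polynomials, so the signed-graph machinery reduces to the ordinary cospectral-rooted machinery for $G$ and $H$ themselves.
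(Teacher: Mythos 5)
Your proposal is correct and follows essentially the same route as the paper: apply Proposition \ref{sgoma} to both rooted products, observe that the cospectrally-rooted hypothesis (together with the fact that the all-positive signing has the same characteristic polynomials as the underlying graphs) forces $A_\lambda(\Sigma[G^+])$ and $A_\lambda(\Sigma[H^+])$ to coincide entrywise, and conclude by taking determinants. Your explicit remark that $\chi(\Gamma^+,\lambda)=\chi(\Gamma,\lambda)$ is a small point the paper leaves implicit, but the argument is the same.
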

\begin{proof}{
The characteristic polynomial of $\Sigma[G^+]$ is equal to determinant of the matrix $A_{\lambda}(\Sigma[G^+])$ by Proposition \ref{sgoma}. The entries of $A_{\lambda}(\Sigma[G^+])$ are the followings:  $$a_{i,j}=\left\{
            \begin{array}{ll}
             \chi(G^+,\lambda), & \hbox{$i=j$;} \\
              -A_{\Sigma}(i,j)\chi(G^+-u,\lambda), & \hbox{otherwise.}
            \end{array}
          \right.
$$
Note that $G^+,H^+$  are cospectrally rooted therefore the entries of the two matrices $A_{\lambda}(\Sigma[G^+])$ and $A_{\lambda}(\Sigma[H^+])$ coincide. Hence they have equal determinants which implies the assertion.}
\end{proof}
 As an example of cospectrally rooted graphs, see the following figure.
\begin{figure}[h]\vspace{-3cm}\hspace{.5cm}
\unitlength 1.20mm \linethickness{0.4pt}
\begin{picture}(80,60)(-25,80)
 \put (0,110) {\line (1,0) {10}}\put(10,110) {\line (1,-2) {4}}
\put(10,94) {\line(1,2){4}} \put (10,94 ) {\line (-1,0) {10}} \put
(0 ,94) {\line (-1,2) {4}}\put (0,110 ) {\line (-1,-2) {4}} \put
(60,112) {\line (0,-1) {12}}\put(60,100) {\line (2,-1) {11}}
\put(60,100) {\line(-2,-1){11}} \put (10,94 ) {\line (-1,0) {10}}
\put (0 ,94) {\line (-1,2) {4}}\put (0,110 ) {\line (-1,-2) {4}}

\put(0,110
  ){\circle*{1.5}}\put(10,110){\circle*{1.5}}\put(0,94
  ){\circle*{1.5}}\put(10,94){\circle*{1.5}}
  \put(14,102){\circle*{1.5}}\put(-4,102){\circle*{1.5}}\put(5,102){\circle*{1.5}}
\put(60,112
  ){\circle*{1.5}}\put(60,106){\circle*{1.5}}\put(60,100
  ){\circle*{1.5}}\put(65.5,97.5){\circle*{1.5}}
  \put(54.5,97.5){\circle*{1.5}}\put(71,94.5){\circle*{1.5}}\put(49,94.5){\circle*{1.5}}
\put(12,111){\makebox(0,0)[cc]{$u$}}
\put(5,90){\makebox(0,0)[cc]{$G$}}
\put(58,107){\makebox(0,0)[cc]{$v$}}
\put(60,91){\makebox(0,0)[cc]{$H$}}
\end{picture}
\vspace{-1cm}
\caption{Cospectrally rooted graphs}\label{2}
\end{figure}
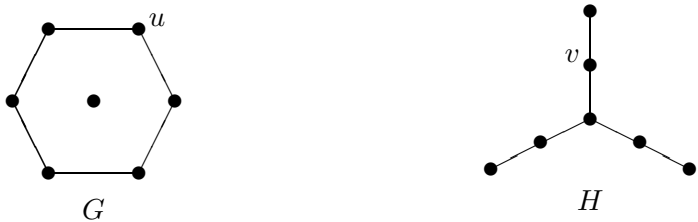

{\bf Ending Remark. } In Proposition \ref{cos} we considered the root product of $\Sigma$ by a list consisting of a unique graph. But this applies in a wider manner. Let $\tilde{\mathcal{H}_1}$ and $\tilde{\mathcal{H}_2}$  be the lists ${H_1}^+,\ldots,{H_n}^+$ and ${H'_1}^+,\ldots,{H'_n}^+$ respectively. Suppose that for any $i=1,\ldots,n$ the graphs $H_i$ and $H'_i$ are either cospectrally rooted or isomorphic (we call this the \textit{coiso condition}) then the same result as mentioned in Proposition \ref{cos} holds true. In fact the signed graphs $\Sigma[\tilde{\mathcal{H}_1}]$ and $\Sigma[\tilde{\mathcal{H}_2}]$ remains cospectral. This follows by a similar approach which is used in the proof of Proposition \ref{cos}. The results also remains valid if we replace the all positive signed graphs with all negative ones, or even with arbitrary signed graphs providing the coiso condition. Note that for providing non isomorphic signed graphs as result, not all the graphs in the lists should be isomorphic.


\end{document}